\definecolor{shadecolor}{rgb}{0.9,0.9,0.9}
\newtheorem{lemma}{Lemma}
\newenvironment{proof}{{\noindent\it Proof.}~~}{\hfill $\square$\vskip\topsep}
\title{A dynamic ordering policy for a stochastic inventory problem with cash constraints}
\xpatchcmd{\author}{\relax#1\relax}{\relax\detokenize{#1}\relax}{}{}
\author[\empty]{Zhen Chen\textsuperscript{a,}\thanks{Corresponding author: robinchen@swu.edu.cn}}
\author[b]{Roberto Rossi}
\affil[a]{College of Economics and Management, Southwest University, Chongqing 400715, China}
\affil[b]{Business School, University of Edinburgh, Edinburgh EH8 9JS, United Kingdom}
\date{}
\pgfplotsset{compat=1.14}
\begin{document}

\maketitle


\begin{abstract}
This paper investigates a stochastic inventory management problem in which a cash-constrained small retailer periodically purchases a product from suppliers and sells it to a market while facing non-stationary demands. In each period, the retailer's available cash restricts the maximum quantity that can be ordered. There exists a fixed ordering cost for the retailer when purchasing. We partially characterize the optimal ordering policy by showing it has an $\bf s-C$ structure: for each period, when initial inventory is above the $\bf$s threshold, no product should be ordered no matter how much initial cash it has; when initial inventory is not large enough to be a $\bf s$ threshold, it is also better to not order when initial cash is below the threshold $C$. The values of $C$ may be state-dependent and related to each period's initial inventory. A heuristic policy $(s, C(x), S)$ is proposed: when initial inventory $x$ is less than $s$ and initial cash is greater than $C(x)$, order a quantity that brings inventory as close to $S$ as possible; otherwise, do not order. We first determine the values of the controlling parameters $s$, $C(x)$ and $S$ based on the results of stochastic dynamic programming and test their performance via an extensive computational study. The results show that the $(s, C(x), S)$ policy performs well with a maximum optimality gap of less than 1\% and an average gap of approximately 0.01\%. We then develop a simple and time-efficient heuristic method for computing policy $(s, C(x), S)$ by solving a mixed-integer linear programming problem and approximate newsvendor models: the average gap for this heuristic is approximately 2\% on our test bed.

\end{abstract}

\begin{keywords}
stochastic inventory; non-stationary demand; cash-flow constraint; $(s, C(x), S)$ policy
\end{keywords}

\section{Introduction}

Cash flow management is important to the survival and growth of many businesses, especially for small-to-medium-sized enterprises (SMEs), including start-up companies and small retailers. Cash shortage may disrupt a firm's smooth operation and can even lead to insolvency \citep{john2014effect}. \cite{smirat2016cash} sampled SMEs in Jordan and found that cash management practices had influence on the financial performance of those firms. A report compiled by the research firm CB Insights found that 29\% of start-ups failed because of cash crisis \citep{cbinsights2018}. For some small retailers, the maximum number of products they can purchase depends on their available cash in this period. Some of those retailers are widely present in many developing countries and called nanostores \citep{Boulaksil2018}.  On Chinese E-commerce platforms such as Taobao.com of Alibaba Group and Jd.com of JD Group, or on social software WeChat of Tecent Group, there are many small online retailers operated by only a few people or even a single individual. External financing is more difficult to obtain for those retailers than for larger business entities. Therefore, such small retailers must consider cash constraints during business operations.



An important class of inventory control problems is the single-item single-stocking location stochastic lot-sizing problem. This problem has been widely investigated since the 1950s (\cite{wagner1958dynamic}, \cite{Scarf1960}). An interesting stream in the stochastic lot-sizing literature focuses on cash constraints. Cash constraints are different from order quantity capacity constraints in that cash flow is dynamic and related to the retailer’s revenues and costs, while order quantity capacity constraints simply impose a fixed upper bound on the ordering quantity. To the best of our knowledge, existing works on multi-item stochastic inventory problems considering cash constraints either focus on single-period problems (e.g. \cite{bi2018dynamic}) or are solved by simulation in a multi-period setting due to the complexity of the problem  (e.g. \cite{Boulaksil2018}). This motivates our study on a single-item multi-period problem.

Another difference between our paper and previous works is that we consider a fixed ordering cost in the problem. In real-life transactions, fixed ordering costs do exist for some retailers. For example, small online retailers mentioned above usually purchase from their suppliers every few weeks. The transportation cost and some other expenses of each procurement can be viewed as a fixed ordering cost because they are not related to the order quantity.


As pointed out by \cite{graves1999single}, one major theme in the continuing development of inventory theory is to incorporate more realistic assumptions about product demand into inventory models. In this work, we consider the cash-constrained single-item single-stocking location lot-sizing problem for non-stationary stochastic demands. To the best of our knowledge, the optimal policy for the non-stationary stochastic lot-sizing problem under cash constraints has not been studied before. Moreover, the computation of optimal or near-optimal policy parameters for multi-period stochastic inventory problems remains a challenging task. Our contributions to the literature are as follows.
\begin{itemize}
  \item We investigate a cash-constrained stochastic inventory system with fixed ordering cost for the retailer.

  \item We discuss the characteristics of the optimal ordering policy for the problem under non-stationary demand over a finite horizon, and discuss a partial characterisation of the optimal ordering strategy: if initial inventory is above the threshold $s$, or if the initial cash is below $C(x)$ when initial inventory $x$ is less then the threshold $s$, then there is no need for the retailer to order.


 \item Inspired by the former analysis, we propose an ordering policy: $(s, C(x), S)$. We first introduce an effective heuristic for determining the values of $s$, $C(x)$ and $S$ based on the results of stochastic dynamic programming. A comprehensive numerical study shows the average optimality gaps for this policy are very small and the  $(s, C(x), S)$ policy is near-optimal, with a maximum gap of less than 1\%.

  \item We finally develop an efficient algorithm to obtain the values of $s$, $C(x)$ and $S$ by mixed-integer linear programming and approximating each ordering cycle to newsvendor models. The method can solve the problem in fractions of a second, and the average gap on the test bed is approximately 2\%.
\end{itemize}

The rest of this work is structured as follows. Section 2 reviews the related literature in detail. Section 3 describes the problem setting and formulates a stochastic dynamic programming model. Section 4  summarizes several characteristics of the optimal ordering pattern. We present a heuristic ordering policy and their computation methods in Section 5. A computational study and its results are detailed in Section 6. Finally, Section 7 draws conclusions and outlines future research directions.

\section{Literature review}

An important part of supply chain management is financial flow management\citep{cooper1997supply}. Some works considered the Net Present Value (NPV) of the cash flow in the traditional inventory problems. For example, \cite{helber1998cash} built a NPV model for a multi-item capacity-constrained problem.  \cite{wu2016inventory} made a discounted cash-flow analysis for inventory models with deteriorating items and partial trade credits. Working capital is another aspect that has been investigated by many scholars in recent years. \cite{zeballos2013single} discussed an inventory problem with working capital constraints and short-term debt. \cite{Yuan2018A} took working capital requirements into account in a single-item lot-sizing problem. \cite{luo2019managing} built a working capital maximization model for an inventory problem with trade credit and cash-related costs. There are also some works that addressed cash flow and maximize the long-term survival possibility for start-up firms, see e.g. \cite{archibald2002should}, \cite{tanrisever2012production}, \cite{archibald2015managing}.

Although the aforementioned works have considered cash flow, the available cash in their models does not affect the ordering quantity decisions, hence these works do not consider cash constraints. Among works considering cash constraint, \cite{buzacott2004inventory} noted the importance of jointly considering operational and financial decisions by analyzing a cash-constrained newsvendor model with financing behavior. \cite{dada2008financing} built a Stackelberg model between a bank, manufacturer and cash-constrained retailer. \cite{raghavan2011short} considered a two-level supply chain with a retailer and manufacturer both facing cash constraints. \cite{kouvelis2012financing} gave a detailed discussion of optimal trade credit contracts in a game model. \cite{moussawi2013joint} developed a model for a cash-constrained retailer under delay payments. \cite{Tunca2017Buyer} discussed the role and efficiency of buyer intermediation in supplier financing through a game-theoretical model. \cite{bi2018dynamic} addressed short-term financing in an inventory problem of multi-item setting.

Most of the literature focusing on multi-period stochastic inventory problems with cash constraints consider single-item problems.   \cite{chao2008dynamic} investigated a multi-period single-item self-financing newsvendor problem. They proved the optimal ordering pattern is an approximate base stock policy and presented a simple algorithm to solve the problem for stationary demands.  \cite{gong2014dynamic} extended this by considering short-term financing. \cite{Katehakis2016Cash} analyzed non-stationary demand processes and time-varying interests. \cite{Boulaksil2018} formulated a cash-constrained stochastic inventory model with consumer loans and supplier credits for nanostores.


Despite considering cash constraints, no fixed ordering cost is embedded in the aforementioned works. The existence of fixed ordering cost makes it difficult to analyze the structure of the optimal ordering policy. \cite{ChenZhang2019} investigated cash flow constraints in a lot-sizing problem. However, their system operates under deterministic demand. Next, we will review some pioneering works dealing with the stochastic inventory problem with fixed ordering costs, which inspires us to develop the ordering policy in this paper.

Without a capacity constraint, \cite{Scarf1960} proved that the optimal ordering policy for the general single-item stochastic lot-sizing problem is $(s, S)$, where $s$ denotes the reorder point and $S$ is the order-up-to level. The optimality is proved through a property called $K$-convexity. Optimal ordering patterns have not been thoroughly characterized for the capacitated stochastic lot-sizing problem. A key finding by \cite{Chen1996X} proved that with stationary demand and fixed capacity, the optimal policy has a $\bf X-Y$ band structure pattern: when the initial inventory level is below $\bf X$, it is optimal to order at full capacity; when the initial inventory level is above $\bf Y$, do not order. \cite{Gallego2000Capacitated} further clarified this structure with CK-convexity and divided the optimal ordering policy into four regions: in two of these regions the optimal policy is completely specified, while it is partially specified in the other two. \cite{shaoxiang2004infinite} extended it for the infinite horizon model. \cite{chao2008optimal} found the optimal policy for a capacitated problem with a special supply chain contract. \cite{ozener2014near} relaxed the capacitated problem with linear holding and penalty cost, Poisson demand, and proposed a heuristic $(s, \Delta)$ policy. \cite{Shi2014Approximation} developed approximation algorithms with worst-case performance guarantee for this problem. There are also some other related works such as \cite{Chan2003A} and \cite{Yang2014Analysis}. While most of the above works considered the capacity constraint as static exogenous,  cash constraint is more complex because actually cash constraint is a function of operational decisions.


For the general $(s, S)$ policy, the computation of the values of $s$, $S$ in each period has been considered to be prohibitively expensive for years. A number of existing works have attempted to solve this problem under stationary demands; see e.g. \cite{federgruen1984efficient}, \cite{zheng1991finding}, \cite{feng2000new}. For non-stationary demands, recent progress has been made by \cite{Bollapragada1999A} and \cite{xiang2018computing}. Nevertheless, those works are for the uncapacitated situation ---- there are no upper bounds on ordering quantity.


A comparison between some of the aforementioned inventory problems and our research is presented in Table \ref{table:literature}.

\section{Problem setting}
For convenience, the notations adopted in this paper are listed in Table \ref{table:notations}. Other relevant notations will be introduced as needed.


\linespread{1.2}
\makesavenoteenv{tabular}
\makesavenoteenv{table}
\begin{table}[!ht]
\centering
\caption{Comparisons with some other related literature.}
\small
\noindent
\begin{tabular}{lccccc}
\toprule
\multirow{2}*{Authors}  &Demand &\multirow{2}*{Multi items}  & Fixed  & Multi & Capacity \\
& pattern  & & ordering cost & period & constraints\\
\midrule
\cite{helber1998cash}
 &Deterministic &\checkmark &\checkmark &\checkmark
&\\
\specialrule{0em}{2pt}{2pt}


\cite{Yuan2018A} &Deterministic & &\checkmark &\checkmark &\\
\specialrule{0em}{2pt}{2pt}

\cite{ChenZhang2019} &Deterministic
&
&\checkmark
&\checkmark
&cash\\
\specialrule{0em}{2pt}{2pt}

\cite{archibald2015managing} &\multirow{2}* {Stochastic} & & &\multirow{2}*{\checkmark} &\\
\cite{luo2019managing}\\
\specialrule{0em}{2pt}{2pt}

\cite{buzacott2004inventory} & \multirow{2}*{Stochastic} & & &&\multirow{2}*{cash}\\
\cite{moussawi2013joint}\\
\specialrule{0em}{2pt}{2pt}

\cite{bi2018dynamic}&Stochastic &\checkmark&&&cash\\
\specialrule{0em}{2pt}{2pt}

\cite{Scarf1960}&Stochastic&&\checkmark\\
\specialrule{0em}{2pt}{2pt}

\cite{Chen1996X} & \multirow{3}*{Stochastic} &  &\multirow{3}*{\checkmark} &&\multirow{3}*{fixed}\\
\cite{Gallego2000Capacitated}\\
\cite{shaoxiang2004infinite}\\
\specialrule{0em}{2pt}{2pt}


\cite{chao2008dynamic} &
\multirow{3}*{Stochastic} &  & &\multirow{3}*{\checkmark}&\multirow{3}*{cash}
\\
\cite{gong2014dynamic}&\\
\cite{Katehakis2016Cash}\\
\specialrule{0em}{2pt}{2pt}

\cite{Boulaksil2018}
&Stochastic  &\checkmark
&
&\checkmark
&cash\\
\specialrule{0em}{2pt}{2pt}

This work & Stochastic &  &\checkmark &\checkmark &cash\\
\bottomrule
\end{tabular}\label{table:literature}
\end{table}
\linespread{1.5
}
\normalsize

\subsection{Problem description}
In our problem, a cash-constrained retailer periodically purchases one product from its suppliers and sells the product to customers. Customer demand is non-negative, discrete and independently distributed from period to period. The random demand for each period can be non-stationary and is represented by a variable $\xi_n$. Its probability density function is $\phi_n(\xi)$ and cumulative distribution function is $\Phi_n(\xi)$. Like \cite{shaoxiang2004infinite} and \cite{Chen1996X}, we also assume the maximum demand value is $D_u$. Additionally, we also assume the minimum demand value is $D_b$. Theoretically, $D_b$ can be zero, and $D_u$ can be a very large number. In many real-life cases, extremely large demands are low-probability events and can be neglected. This assumption also guarantees there are finite states in the stochastic dynamic programming of this problem.

\linespread{1.2}
\begin{table}[!ht]
\centering
\caption{Notations adopted in our paper.}
\small
\begin{tabular}{p{4cm}<{\raggedright}p{9cm}<{\raggedright}}
\toprule
Notations  &Description\\
\midrule
Indices  &\\
$n$        &Period index\\
\specialrule{0em}{3pt}{3pt}
Problem parameters  &\\
\specialrule{0em}{1pt}{1pt}
$R_0$  &Initial cash in the first period \\
$x_{0}$  &Initial inventory in the first period\\
$x_n$      &End-of-period inventory in period $n$\\
$R_n$      &End-of-period cash in period $n$\\
$p$      &Selling price\\
$K$      &Fixed ordering cost\\

\multirow{2}*{$W$}      &Overhead costs like rents or wages that the retailer needs to pay in each period\\
$c$      &Unit variable ordering cost\\
$\gamma$      &Unit salvage value for remnant inventory in the final period\\
$D_{u}$      &Maximum possible demand value\\
$D_{b}$      &Minimum possible demand value\\
$B(R)$ & A function of initial cash $R$, representing the cash constraint\\
\specialrule{0em}{3pt}{3pt}
\multirow{2}*{Random variables} &\\
\specialrule{0em}{1pt}{1pt}
\multirow{2}*{$\xi_n$}      &Random demand in period $n$, with probability density function $\phi_n(\xi)$, cumulative distribution function $\Phi_n(\xi)$\\
\specialrule{0em}{3pt}{3pt}
Decision variables  &\\
\specialrule{0em}{1pt}{1pt}
$y_{n}$ &Immediate inventory level after ordering in period $n$\\
\bottomrule
\end{tabular}\label{table:notations}
\end{table}
\linespread{1.5}
\normalsize



At the beginning of the planning horizon, the retailer has an initial inventory of $x_0$ and an initial cash $R_0$. Since we focus on a retail setting, we assume that customers usually do not wait for back-ordered items and that unmet demand is lost. Moreover, we assume that suppliers require immediate payment and that the order delivery lead time is zero. Excess stock is transferred to the next period as inventory, and the selling back of excess stock is not allowed. At the beginning of any period $n$, the retailer has to decide the order quantity $Q_n$; we let $y_n=x_{n-1}+Q_n$, where $y_n$ denotes the
``order-up-to level.'' The inventory flow function of the retailer can then be expressed as
\begin{equation}
    x_{n}=~\max\{y_{n}-\xi_n,0\}=(y_n-\xi_n)^+\quad\label{eq:IFlow}
\end{equation}

The initial inventory of the retailer is $x_0$, and the initial cash balance of the retailer is $R_{0}$. The selling price of the item is $p$, and the retailer receives payments only when its items are delivered to the customers. A fixed ordering cost $K$ is charged to the retailer when ordering from its suppliers; there is also a variable ordering cost $c$ charged on every ordered unit. We assume that the retailer needs to pay a fixed overhead cost $W$ at the beginning of each period, to operate the business and cover fixed costs such as rents or wages for staffs; while variable inventory holding cost charged on every item unit carried from one period to the next is assumed to be negligible.


In each period $n$, the initial cash is $R_{n-1}$, and the ordering quantity is $y_n-x_{n-1}$. Define $\delta(y_n-x_{n-1})$ as a unit step function to determine whether the retailer makes an order in period $n$: $\delta(y_n-x_{n-1})=1$ when $y_n>x_{n-1}$, $\delta(y_n-x_{n-1})=0$ when $y_n=x_{n-1}$. The total ordering cost in period $n$ is $K\delta(y_n-x_{n-1})+c(y_n-x_{n-1})$, which includes the fixed ordering cost and variable ordering costs. Because of the cash constraint, when the retailer purchases items, its available cash must be greater than the total costs in this period, namely, ordering costs plus overhead cost. This is shown by the constraint below.
\begin{equation}
K\delta(y_n-x_{n-1}) + c(y_n-x_{n-1})+W\leq R_{n-1}\label{con:cashConstraint}
\end{equation}


The actual sales quantity in period $n$ is $\min\big\{\xi_n,y_n\big\}$, and revenue is $p\min\big\{\xi_n,y_n\big\}$. End-of-period cash $R_n$ for period $n$ is defined as the initial cash balance $R_{n-1}$ plus the period's revenue minus the period's total costs $K \delta(y_n-x_{n-1})+c(y_n-x_{n-1})+W$. Any inventory left at the end of the planning horizon has a salvage value $\gamma$ per unit. Therefore, the full expression of $R_n$ is given by Eq. \eqref{eq:cashFlow}.
\begin{equation}
  R_n=\begin{cases}
  R_{n-1}+  p\min\{\xi_n,y_{n}\}-\left[K \delta(y_n-x_{n-1})+c(y_n-x_{n-1})\right]-W\quad &n\leq N\\
  R_N+\gamma x_N &n=N+1
  \end{cases}\label{eq:cashFlow}
\end{equation}

The main difference between our problem and previous inventory literature about cash constraint in inventory problems, such as \cite{chao2008dynamic} and \cite{gong2014dynamic}, is that we consider a fixed ordering cost $K$ for the retailer. Another difference is that we do not account for the deposit interest of each period's cash balance because the deposit interest from the bank is generally very small compared with the retailer's transaction volume and usually does not exert an effect on its operational decisions.

To avoid triviality, we assume $0\leq \gamma<c<p$. Our aim is to find a replenishment plan that maximizes the expected terminal cash increment, i.e., $E(R_{N+1})- R_0$.

\subsection{Stochastic dynamic programming modeling}
We formulate a stochastic dynamic programming
 model for our problem to analyze its mathematical properties.

\textbf{States.}
The system state at the beginning of period $n$ is represented by initial inventory $x_{n-1}$ and initial cash quantity $R_{n-1}$. Note that since demand is assumed to be discrete and maximum demand value is $D_u$, there are finite states for inventory and cash in our problem.

\textbf{Actions.}
The action in period $n$ is the order-up-to level $y_n$, given initial inventory $x_{n-1}$ and initial cash quantity $R_{n-1}$. From the cash constraint \eqref{con:cashConstraint}, we use a function $B(R)$ to show the upper bound for the ordering quantity.
\begin{align}
  B(R) =\max\left\{0, \frac{R_{n-1}-K-W}{c}\right\}\label{eq:Qbound}
\end{align}

The bounds for $y_n$ are represented by
\begin{equation}
x_{n-1}\leq y_n\leq \overline{y}_n=x_{n-1}+B(R)\label{con:ybound}
\end{equation}

\textbf{State transition function.}
The state transition function for inventory is given by inventory flow equation \eqref{eq:IFlow}, while the state transition function for cash is presented by cash flow equation \eqref{eq:cashFlow}.

\textbf{Immediate profit.}
Let $\Delta R_n$ be the cash increment in period $n$, expressed as
\begin{equation}
\Delta R_n=p\min\big\{\xi_n,y_{n}\big\}-\left[K\delta(y_n-x_{n-1})+c(y_n-x_{n-1})\right]-W \label{eq:deltaR}
\end{equation}

The immediate profit for period $n$ is the expected cash increment $E(\Delta R_n)$ during this period given $R_{n-1}$, $x_{n-1}$ and $y_{n}$. $E(\Delta R_n)$ can be expressed as
\begin{align}
E(\Delta R_n)=\int_{D_b}^{D_{u}}\Delta R_n\phi_n(\xi)d\xi
\end{align}

\textbf{Optimality Equation.}
Define $F_{n}(R_{n-1},x_{n-1})$ as the maximum expected cash increment during periods $n,n+1,\dots,N$ when the initial inventory and cash of period $n$ are $x_{n-1}$ and $R_{n-1}$, respectively. The optimality equation for the problem is expressed as:
\begin{align}
F_{n}(x_{n-1}, R_{n-1})=\max\limits_{x_{n-1}\leq y_{n}\leq \overline{y}_n}\left\{E(\Delta R_n)+ \int_{D_b}^{D_{u}}F_{n+1}((y_{n}-\xi_n)^+, R_{n-1}+\Delta R_n)\phi_n(\xi)d\xi\right\}\quad n=1,2,\dots,N\label{eq:functionalequation}
\end{align}

The boundary equation is:
\begin{equation}
F_{N+1}(x_{N}, R_N)=\gamma x_N
\end{equation}



For convenience, we define $L_n(y)$ as follows.
\begin{align}
    L_n(y)=\int_{
    D_b}^{D_{u}}\left[p\min\{\xi_n,y\}-cy\right]\phi_n(\xi)d\xi-W
\end{align}


$L_n(y)$ is an obvious concave function. For ease of expression, we suppress $n$, $D_u$, $D_b$ in some expressions and use it only when required. Therefore, $\Delta R$, $L(y)$, $E(\Delta R)$ and $F_n(x, R)$ can be written as:
\begin{align}
  \Delta R = &p\min\{\xi,y\}-K\delta(y-x)-c(y-x)-W\label{eq:deltaR2}\\
  L(y)=&~\int\left[p\min\{\xi,y\}-cy\right]\phi(\xi)d\xi-W\label{eq:Ly2}\\
E(\Delta R)=&\int\left[p\min\{\xi,y\}\right]\phi(\xi)d\xi-K\delta(y-x)-c(y-x)-W=L(y)+cx-K\delta(y-x)\label{eq:EdeltaR2}\\
F_n(x, R)=&
\begin{cases}
\max\limits_{x\leq y\leq x+B(R)}\left\{L(y)+cx-K\delta(y-x)+ \int F_{n+1}((y-\xi)^+, R+\Delta R)\phi(\xi)d\xi\right\} &n\leq N\\
~~\gamma x &n=N+1
\end{cases}
\label{eq:functionalequation2}
\end{align}


\section{Ordering policy discussion}
In this section, we provide some characteristics about the optimal ordering policy. We first show this by a numerical example, then analyze the optimal ordering policy in the last period, and finally obtain some theoretical expressions with proofs for the general periods.

\subsection{Numerical findings}
Since optimality function $F_n(x, R)$ for the dynamic programming model is two-dimensional, it is difficult to detect its mathematical properties. By fixing $x$ or fixing $R$ and viewing $F_n(x, R)$ as a function of a single decision variable, $F_n(x, R)$ does not show K-convexity as in \cite{Scarf1960} or K-concavity as in \cite{chen2004coordinating}, nor does it show CK-convexity (or CK-concavity) as in \cite{Gallego2000Capacitated} or (C, K)-convexity (or (C, K)-concavity) as in \cite{shaoxiang2004infinite}. This result can be easily confirmed by some numerical examples.

However, we can find some characteristics of the optimal ordering policy. We illustrate these characteristics with the following numerical example: there are 4 periods; demands follow Poisson distributions and the average values are 9, 13, 20, 16; fixed ordering cost $K=20$, unit variable ordering cost $c=1$, overhead costs $W=2$, selling price $p=5$ and unit salvage value $\gamma=0$.

The optimal ordering quantities $Q^\ast(x, R)$ for different initial inventory $x$ and $R$ in the first period are given below:
\vspace{1.5ex}

\aboverulesep=0ex
\belowrulesep=0ex

\small\noindent
\begin{tabularx}{\linewidth}{@{}@{\extracolsep{\fill}}ll|lllllllllllllll@{}}
  & &\multicolumn{15}{c}{\bm{$R$}}\\
  & &\bf 21&\bf 23&\bf 25&\bf 27&\bf 29&\bf 31&\bf 33 &\bf 35 &\bf 37 &\bf 39 &\bf 41 &\bf 43 &\bf 45 &\bf 47 &\bf 49 \\\hline
\multirow{8}*{\bm{$x$}} &\bf 0
&0 & 0 & 0 & 7 & 9 & 11 & 13 & 14 & 14 & 19 & 21 & 23 & 24 & 24 & 24  \\ 
\cmidrule(lr){6-11}\cmidrule{12-17}
&\bf 1&0 & 0 & 0 & 0 & 7 & 9 & 11 & 13 & 13 & 13 & 19 & 21 & 23 & 23 & 23  \\ 
\cmidrule(lr){7-12}\cmidrule{13-17}
&\bf 2&0 & 0 & 0 & 0 & 0 & 0 & 0 & 0 & 0 & 0 & 19 & 21 & 22 & 22 & 22  \\ 
\cmidrule{13-17}
&\bf 3&0 & 0 & 0 & 0 & 0 & 0 & 0 & 0 & 0 & 0 & 19 & 21 & 21 & 21 & 21  \\ 
\cmidrule{13-17}
&\bf 4&0 & 0 & 0 & 0 & 0 & 0 & 0 & 0 & 0 & 0 & 19 & 20 & 20 & 20 & 20  \\ 
\cmidrule{13-17}
&\bf 5&0 & 0 & 0 & 0 & 0 & 0 & 0 & 0 & 0 & 0 & 19 & 19 & 19 & 19 & 19  \\ 
\cmidrule{13-17}
&\bf 6&0 & 0 & 0 & 0 & 0 & 0 & 0 & 0 & 0 & 0 & 18 & 18 & 18 & 18 & 18 \\ 
\cmidrule{13-17}
&\bf 7&0 & 0 & 0 & 0 & 0 & 0 & 0 & 0 & 0 & 0 & 0 & 0 & 0 & 0 & 0  \\ 
&\bf8 & 0 & 0 & 0 & 0 & 0 & 0 & 0 & 0 & 0 & 0 & 0 & 0 & 0 & 0 & 0  \\ 
\end{tabularx}
\vspace{2pt}

\normalsize

As an illustration of how to read the above results, suppose the initial inventory $x$ is 1 and the initial cash $R$ is 35, then, the optimal ordering quantity is 13 units. Several ordering characteristics can be observed directly from this numerical case:
\begin{itemize}
  \item The optimal ordering policy is not the $(s, S)$ type.

  \item When the initial inventory is very large or initial cash is very small, the optimal ordering quantity is always zero. For example, when $x\geq 7$ or $R\leq 27$, $Q^\ast=0$. Furthermore, the ordering threshold of $R$ is different for different initial inventory. When $x=0$, it is optimal to not order when $R\leq 25$; when $x=2$, it is optimal to not order when $R\leq 39$.


  \item When ordering, it is optimal for the retailer to order as close to some specified inventory level as possible. For a fixed initial inventory, there may be several order-up-to levels. We show the different order-up-to levels by drawing a vertical line segment below the ordering quantity values. For example, in this case, the order-up-to levels for $x=0$ are 14, and 24, and the order-up-to levels for $x=2$ are 24.
\end{itemize}

The above case suggests that at least two thresholds exist, $\bf s$ and $\bf C$: if $x\geq \bf s$ or $R\leq \bf C$, do not order; else, order up to some specified level. For ease of analysis, we first investigate the ordering pattern in the last period $N$.

\subsection{Ordering pattern in the last period}
 When $n=N$, from Eq. \eqref{eq:functionalequation2}, its optimality equation is
\begin{align}
  F_N(x, R)=\max\limits_{x\leq y\leq x+B(R)}\left\{L(y)+cx-K\delta(y-x)+\gamma (y-\xi)^+\right\}\label{eq:FNxR}
\end{align}

Recall that $L(y)=p\min\{\xi, y\}-cy-W$. Let
\begin{equation}
    L'(y)=L(y)+\gamma (y-\xi)^+\label{eq:L'y}
\end{equation}

By Eq. \eqref{eq:FNxR} and \eqref{eq:L'y}, the optimal decision for period $N$ is
\begin{equation}
   y^\ast_N(x,R)=\begin{cases}
   y^\ast\quad &\text{if } L'(x)< \max_{x<y\leq B(R)}L'(y)-K\\
  x &\text{if } L'(x)\geq \max_{x<y\leq B(R)}L'(y)-K
\end{cases}\label{eq:optyN}
\end{equation}

Here, we draw the picture of $L'(y)$ of a real numerical example in Figure \ref{fig: PNy}.

\begin{figure}[!ht]
\centering
\begin{tikzpicture}
\begin{axis}[xlabel=$y$,ylabel=$L'(y)$,xmin=0,xmax=100,ymin=0,ymax=300,clip mode=individual,xtick=\empty,ytick=\empty,tick label style={font=\small},
y label style={at={(0,0.5)}},
x label style={at={(0.5,-0.1)}}]

\pgfplotstableread{PNy.txt}\mydata;
\addplot [color=red,line width=1.5pt]
table [x expr=\thisrowno{0}, y expr=\thisrowno{1}] {\mydata};

 \node at (axis cs:45, -10) {\small $S$};
 \addplot[color=blue,dashed,line width=0.5pt] coordinates{ (45, 0) (45, 265)};
 \addplot[color=blue,dashed,line width=0.5pt] coordinates{ (23, 265) (45, 265)};
\node at (axis cs:20, -10) {\small $s$};
 \addplot[color=blue,dashed,line width=0.5pt] coordinates{ (23, 0) (23, 161)};

 \node at (axis cs:11, -10) {\small $x$};
  \addplot[color=blue,dashed,line width=0.5pt] coordinates{ (13, 0) (13, 91)};

  \node at (axis cs:33, -10) {\footnotesize $x+B(C(x))$};
   \addplot[color=blue,dashed,line width=0.5pt] coordinates{ (28, 0) (28, 195)};
   \addplot[color=blue,dashed,line width=0.5pt] coordinates{ (13, 91) (28, 91)};
   \draw[color=blue, decoration={brace,mirror, raise=4pt},decorate,line width=0.5pt]
     (axis cs: 28,91) -- (axis cs:28,195) node[midway,black,right=5pt] {$K$};

 \draw[color=blue, decoration={brace,raise=4pt},decorate,line width=0.5pt]
   (axis cs: 23,161) -- (axis cs:23,265) node[midway,black,left=5pt] {$K$};
\end{axis}
\end{tikzpicture}
\caption{$s$, $C(x)$ and $S$ in the last period $N$}\label{fig: PNy}
\end{figure}


Since $L'(y)$ is also an apparent concave function, it reaches optimal at:
\begin{equation}
  S=\Phi_N^{-1}\left (\frac{p-c}{p-\gamma}\right)\label{eq:SboundN}
\end{equation}

From Eq. \eqref{eq:optyN}, we can obtain a smallest $\bf s$ threshold for period $N$. The $\bf s$ threshold means when initial inventory $x$ is larger than $\bf s$, it is always better to not order. As shown in Figure \ref{fig: PNy}, $L'(s)$ is smaller than $L'(S)$ for a fixed ordering cost $K$.
\begin{equation}
  s=\min\{y\mid L'(y)\geq L'(S)-K\}\label{eq:sboundN}
\end{equation}

When $x<s$, there exists a $\bf C$ threshold. When $R\leq C(x)$, it is optimal for the retailer to not order because the expected profit cannot cover the ordering costs. The maximum $\bf C$ threshold is the cash level for which the retailer's expected profit by ordering equals the fixed ordering cost plus the expected profit of not ordering.
 The $\bf C$ threshold for an initial inventory $x$ can be obtained by the following equation. Recall that $B(R)$ means the maximum ordering quantity under initial cash $R$.
\begin{equation}
  C(x)=\max\{R\mid L'(x)+K\geq L'(x+B(R))\}\label{eq:C1boundN}
\end{equation}

Note that $C(x)$ is state-dependent and related with inventory level $x$. For an initial inventory level $x<s$, its minimum cash requirement for ordering is $C(x)$. Its maximum order-up-to level under initial cash $C(x)$ is $x+B(C(x))$.  The relation of $s$, $S$, $C(x)$ and $L'(y)$ is shown in Figure \ref{fig: PNy}.  Since $L'(y)$ is increasing when $y<S$, it is optimal for the retailer to replenish its inventory level as close to $S$ as possible when $x<s$ and $R>C(x)$.  The optimal ordering policy in period $N$ can easily be expressed by the following equation:
\begin{equation}
  Q^\ast_N=\begin{cases}
  0\quad &\text{otherwise}\\
  \min\{S-x, B(R)\} &x<s, R>C(x)
\end{cases}\label{eq:optOrderN}
\end{equation}

\subsection{Thresholds of \texorpdfstring{$s$}{} and  \texorpdfstring{$C$}{}}
When $n$ is not restricted to the final period $N$, we provide some analytic expressions for the thresholds of $\bf s$ and $\bf C$.

For any period $n$, an $\textbf{s}$ threshold can be obtained by the following equation, where $\Phi_{n\sim N}$ is the cumulative distribution function of total demand from period $n$ to $N$ and $\Phi^-_{n\sim N}$ is its inverse function.
\begin{align}
  \textbf{s}=&\Phi_{n\sim N}^{-1}\left(\frac{p-c}{p-\gamma}\right)\label{eq:Ybound}
\end{align}

\begin{lemma}
  In any period $n$, there exists a threshold $\bf s$ such that if initial inventory $x\geq \textbf{s}$, regardless of the initial cash, there is no need for the retailer to order. The threshold can be defined by Eq. \eqref{eq:Ybound}. \label{lemma:sBound}
\end{lemma}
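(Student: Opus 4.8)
The plan is to argue by backward induction on the period index $n$, showing that if the initial inventory satisfies $x \ge \mathbf{s}$ with $\mathbf{s} = \Phi_{n\sim N}^{-1}\!\left(\frac{p-c}{p-\gamma}\right)$, then ordering nothing is optimal in period $n$ irrespective of $R_{n-1}$. The base case $n = N$ is essentially already established: in the last period the relevant objective is the concave function $L'(y) = L(y) + \gamma(y-\xi)^+$ (in expectation), whose maximizer is $S = \Phi_N^{-1}\!\left(\frac{p-c}{p-\gamma}\right)$, and once $x$ exceeds this maximizer the term $L'(x)$ already dominates $\max_{x<y}L'(y)$, so a fortiori it dominates $\max_{x<y}L'(y) - K$; by Eq. \eqref{eq:optyN} the optimal action is $y_N^\ast = x$. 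Note that for $n = N$ the distribution $\Phi_{n\sim N}$ is just $\Phi_N$, so the formula \eqref{eq:Ybound} reduces correctly to \eqref{eq:SboundN}.

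For the inductive step, the key observation I would exploit is that when $x$ is already large, the cash-flow coupling between the two state variables becomes irrelevant to the ordering decision: not ordering in period $n$ incurs no fixed cost $K$ and no variable cost, so $R_{n-1}$ is carried forward essentially intact (up to the deterministic revenue and overhead), and the cash constraint \eqref{con:cashConstraint} is vacuous for the action $y_n = x$. Thus it suffices to compare, for each admissible $y > x$, the expected continuation value of ordering up to $y$ against that of ordering nothing. I would write the period-$n$ objective (suppressing $n$) as
\begin{equation}
G(y) = L(y) + cx - K\delta(y-x) + \int F_{n+1}\big((y-\xi)^+,\, R + \Delta R\big)\phi(\xi)\,d\xi,
\end{equation}
and show that for $x \ge \mathbf{s}$ one has $G(x) \ge G(y)$ for all $y \in (x, x+B(R)]$. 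The first-stage term $L(y) + cx$ is concave in $y$ and, because $\mathbf{s}$ majorizes the single-period critical fractile $S$, it is nonincreasing on $[\mathbf{s}, \infty)$; the $-K\delta(y-x)$ term strictly penalizes any $y > x$. The remaining work is to control the continuation integral: one must show that raising $y$ above $\mathbf{s}$ cannot improve $\int F_{n+1}(\cdot)\,\phi\,d\xi$ enough to offset the loss in the first-stage term plus the fixed cost.

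The main obstacle, and the place where I expect to spend the real effort, is precisely this continuation-value comparison, because $F_{n+1}$ lacks the convexity/concavity structure (no $K$-convexity, no $CK$-convexity, as the paper emphasizes) that would make such monotonicity arguments routine. The strategy I would pursue is to invoke the induction hypothesis directly: since $\mathbf{s}$ for period $n$ is defined via the total-demand fractile over periods $n$ through $N$, and since $\mathbf{s}_n \ge \mathbf{s}_{n+1}$ (longer demand horizon, larger quantile), whenever $x \ge \mathbf{s}_n$ the post-demand inventory $(y-\xi)^+$ for $y \ge x$ tends to land in the region where, period by period, no further ordering is triggered — so the continuation essentially reduces to a telescoped newsvendor-type expression $\sum \mathbb{E}[p\min\{\xi_k, \text{(leftover)}\}] - \text{costs} + \gamma(\cdot)^+$, which is concave in the starting inventory and decreasing beyond the aggregate critical fractile. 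Making this rigorous requires a careful argument that, starting from $x \ge \mathbf{s}_n$, ordering nothing now and inheriting the (inductively optimal) no-order continuation yields a value dominating any alternative that orders up to some $y > x$; I would formalize it by exhibiting, for the deviating policy, a coupling with the no-order policy on the same demand sample path showing the inventory trajectory of the deviating policy is pointwise at least that of the no-order policy, hence its marginal revenue gains are dominated by the extra variable plus fixed cost it pays. That coupling/pathwise-domination step is the crux; the rest is concavity bookkeeping.
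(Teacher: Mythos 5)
Your base case is fine and you have correctly located the difficulty, but the inductive step is not actually carried out, and the strategy you sketch for it would not work as stated. The claimed pathwise coupling --- ``the inventory trajectory of the deviating policy is pointwise at least that of the no-order policy, hence its marginal revenue gains are dominated by the extra variable plus fixed cost it pays'' --- is false on individual sample paths: whenever realized cumulative demand exceeds $x$, the extra units bought at $c$ are sold at $p>c$ (or salvaged at $\gamma$), so the deviating policy strictly gains on those paths. The domination you need holds only \emph{in expectation}, and only because $x\geq \mathbf{s}$ forces $\Pr(\text{total demand over } n,\dots,N > x)\leq \frac{c-\gamma}{p-\gamma}$, which is exactly the aggregate critical-fractile condition; a pathwise argument cannot see this. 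Your fallback claim that for $x\geq \mathbf{s}_n$ the post-demand inventory ``lands in the region where no further ordering is triggered,'' so the continuation telescopes into a no-order newsvendor chain, is also not true in general: after one period's demand the inventory can fall well below $\mathbf{s}_{n+1}$, and the optimal continuation may then order (for either policy), so the ``concavity bookkeeping'' you defer to is precisely the unresolved part.

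The paper avoids this continuation-value comparison entirely and does not induct on $F_n$. It defines the aggregated newsvendor function $L'_{n\sim N}(y)$ over the \emph{total} demand from $n$ to $N$ and observes that, because sales are lost (so cumulative sales are bounded by both total demand and total units made available), holding cost is zero, and any policy pays at least the variable cost of what it orders and at least one fixed cost if it ever orders, one gets the horizon-wide upper bound $F_n(x,R)\leq \max_{y\geq x} L'_{n\sim N}(y)+cx-K\delta(y-x)$, attained in the limit $R\to\infty$. Since $L'_{n\sim N}$ is concave with maximizer $\Phi_{n\sim N}^{-1}\!\left(\frac{p-c}{p-\gamma}\right)$, for $x$ at or above this point the right-hand side is maximized by $y=x$, i.e.\ the stock on hand already suffices to attain the bound and ordering cannot help. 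If you want to salvage your approach, the honest fix is to replace the pathwise coupling by this expectation-level aggregate-fractile bound --- at which point you have essentially reconstructed the paper's argument and the backward induction becomes unnecessary.
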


\begin{proof}
The total demand from period $n$ to period $N$ follows a stochastic distribution.  Let
\begin{equation}
    L'_{n\sim N}(y)=\int[p\min\{\xi, y\}-cy+\gamma (y-\xi)^+]\phi_{n\sim N}(\xi)d\xi-(N-n+1)W \nonumber
\end{equation}

where $\phi_{n\sim N}(\xi)$ is the probability  distribution function of total demand from period $n$ to $N$. Since there is no inventory holding cost in our cost, ordering earlier would not bring extra costs. If the retailer has enough cash, it only needs to order at most once in period $n$ to meet all the later demands, which can save fixed ordering costs and result in a similar newsvendor problem. By the above description and Eq. \eqref{eq:functionalequation2}, we know that
\begin{align}
    F_n(x, R)\leq \max_{y\geq x} L'_{n\sim N}(y)+cx-K\delta(y-x)\\
  \lim_{R\rightarrow \infty}F_n(x, R)=\max_{y\geq x} L'_{n\sim N}(y)+cx-K\delta(y-x)\label{eq:limF(x,R)}
\end{align}

Since $L'_{n\sim N}(y)$ is a concave function and reaches optimal at $\Phi_{n\sim N}^{-1}(\frac{p-c}{p-\gamma})$, by Eq. \eqref{eq:limF(x,R)}, when $x\geq \Phi_{n\sim N}^{-1}(\frac{p-c}{p-\gamma})$, there is no need to order because the available inventory is enough to obtain maximum expected profit. Therefore, Eq. \eqref{eq:Ybound} is a $\bf{s}$ threshold.

\end{proof}

More conservative $\bf s$ thresholds exist. For example, if $x\geq (N-n)D_u$, the initial inventory is sufficient to meet all the demands in later periods; thus, there is no need for the retailer to order. By Lemma \ref{lemma:cashInventoryIncreasing} below, the maximum demand $D_u$ can also be a  $\bf s$ threshold (we omit the proof for brevity). Note that the $\textbf{s}$ threshold defined by Eq. \eqref{eq:Ybound} is also conservative in real numerical tests.

As for the threshold of $\bf C$, we first bring two lemmas about the optimality equation $F_n(x, R)$. Proofs for the two lemmas are in Appendix \ref{app-3}.






\begin{lemma}
  For any period $n$ and fixed $x$, $F_n(x, R)$ is non-decreasing in $R$.\label{lemma:non-decreasing}
\end{lemma}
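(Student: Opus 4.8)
### Proof proposal for Lemma \ref{lemma:non-decreasing}

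The plan is to prove the statement by backward induction on $n$, from the terminal period $N+1$ down to $1$. The base case is immediate: $F_{N+1}(x, R) = \gamma x$ does not depend on $R$ at all, so it is trivially non-decreasing in $R$. For the inductive step I would assume that $F_{n+1}(x', R')$ is non-decreasing in $R'$ for every fixed $x'$, and show the same property propagates to $F_n(x, R)$ through the optimality equation \eqref{eq:functionalequation2}.

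The argument for the inductive step rests on two observations. First, increasing $R$ only enlarges the feasible set of actions: from \eqref{eq:Qbound} and \eqref{con:ybound}, $B(R)$ is non-decreasing in $R$, so the interval $[x, x+B(R)]$ over which $y$ ranges grows (weakly) as $R$ grows. Taking a maximum over a larger set can only increase the value. Second, for any \emph{fixed} feasible action $y$, I must show the objective inside the braces is non-decreasing in $R$. The immediate-profit term $L(y) + cx - K\delta(y-x)$ does not involve $R$, so it is constant in $R$. The continuation term is $\int F_{n+1}\bigl((y-\xi)^+,\, R + \Delta R\bigr)\phi(\xi)\,d\xi$, where from \eqref{eq:deltaR2} the cash increment $\Delta R = p\min\{\xi,y\} - K\delta(y-x) - c(y-x) - W$ also does not depend on $R$ for a fixed action $y$. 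Hence for a fixed $y$ and fixed $\xi$, raising $R$ raises the second argument $R + \Delta R$ of $F_{n+1}$ by the same amount, and by the induction hypothesis $F_{n+1}$ is non-decreasing in that argument; integrating a pointwise inequality over $\xi$ with respect to the (nonnegative) density $\phi$ preserves it. Combining the two observations: for $R_1 \le R_2$, every action feasible under $R_1$ is feasible under $R_2$ and yields a weakly larger objective value under $R_2$, so the maximum under $R_2$ is at least the maximum under $R_1$, i.e.\ $F_n(x, R_1) \le F_n(x, R_2)$.

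The only mildly delicate point — the main obstacle, such as it is — is bookkeeping the dependence of $\Delta R$ on $R$: one has to be careful that when we hold the \emph{action} $y$ fixed (rather than the order quantity, which is $y - x$ and also $R$-free), the cash flow $\Delta R$ genuinely contains no hidden $R$-dependence, so that the shift $R \mapsto R + \Delta R$ is a pure translation by an $R$-independent quantity. Inspecting \eqref{eq:deltaR2} confirms this. A secondary point is that the maximum is attained (so the two-set comparison is legitimate), which is guaranteed by the finiteness of the state and action spaces noted in the problem description; alternatively one can phrase the comparison purely in terms of suprema, for which the monotonicity argument goes through verbatim without needing attainment.
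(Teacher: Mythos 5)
Your proposal is correct and follows essentially the same route as the paper's proof: backward induction from $F_{N+1}(x,R)=\gamma x$, using that a larger $R$ only enlarges the feasible interval $[x,\,x+B(R)]$ and that, for a fixed action $y$, $\Delta R$ is $R$-free so the induction hypothesis on $F_{n+1}$ transfers pointwise under the integral. The paper phrases this by fixing the optimizer $y^\ast$ for cash $R$ and reusing it for $R'>R$, which is the same argument in slightly less general form.
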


Note that when $R$ is sufficiently large, there is no ordering quantity constraint for the retailer, and the problem is the same as the traditional problem in \cite{Scarf1960}. In this situation, for fixed $x$, $F_n(x, R)$ is constant.

\begin{lemma}
  For any period $n$, $\forall b>0$,  $F_n(x,~R+K+cb)+K+cb\geq F_n(x+b, ~R)$. \label{lemma:cashInventoryIncreasing}
\end{lemma}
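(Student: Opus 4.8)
The plan is to prove the inequality directly: for the ``cash-rich, inventory-poor'' state $(x,\,R+K+cb)$ I will exhibit a feasible policy whose expected cash increment over periods $n,\dots,N$ is at least $F_n(x+b,R)-K-cb$, after which the claim is immediate from the definition of $F_n$ and from the monotonicity of $F_{n+1}$ in cash (Lemma~\ref{lemma:non-decreasing}). The intuition is that $K+cb$ is precisely the cost of converting $b$ units of cash into $b$ units of on-hand stock, so $(x,\,R+K+cb)$ can always ``catch up'' with $(x+b,R)$ by placing one extra order at the beginning of period $n$ and imitating the optimal policy of $(x+b,R)$ from then on.

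In detail, I would fix $b>0$, let $y^\ast$ be an order-up-to level optimal for $(x+b,R)$ in period $n$ (so $x+b\le y^\ast\le x+b+B(R)$), and have state $(x,\,R+K+cb)$ order up to $y^\ast$ in period $n$. Feasibility rests on $K\ge 0$ together with the retailer's cash covering its overhead: one checks $b+B(R)\le B(R+K+cb)$, so $y^\ast\le x+b+B(R)\le x+B(R+K+cb)$ is admissible, and since $y^\ast\ge x+b>x$ this choice incurs the fixed cost $K$. A short bookkeeping computation using \eqref{eq:deltaR2} and \eqref{eq:functionalequation2} then shows that, for every demand realisation $\xi$, this policy reaches period $n+1$ with the \emph{same} inventory $(y^\ast-\xi)^+$ as the $(x+b,R)$-path, with cash larger by exactly $K$ when $y^\ast>x+b$ and equal when $y^\ast=x+b$ (the case in which $(x+b,R)$ does not order), while its period-$n$ expected cash increment is $cb$ smaller in the first case and $K+cb$ smaller in the second. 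Substituting into \eqref{eq:functionalequation2} and using Lemma~\ref{lemma:non-decreasing} to discard the non-negative cash surplus gives $F_n(x,\,R+K+cb)\ge F_n(x+b,R)-cb\ge F_n(x+b,R)-K-cb$ in the first case and $F_n(x,\,R+K+cb)\ge F_n(x+b,R)-K-cb$ directly in the second. This establishes the lemma and, notably, uses only Lemma~\ref{lemma:non-decreasing}; no induction on $n$ is needed.

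The step I expect to be the real obstacle is the feasibility inequality $b+B(R)\le B(R+K+cb)$, which is equivalent to $R\ge W$ --- i.e.\ to the retailer holding at least enough cash to pay its overhead, so that the extra catch-up order is affordable. Under the modelling convention that this always holds the proof above is complete. If one wants the statement for arbitrary cash levels the catch-up construction fails, and the only alternative I see is a backward induction on $n$ coupled with the demand: write $(x+b-\xi)^+=(x-\xi)^++\beta(\xi)$ with $0\le\beta(\xi)\le b$ and $\min\{\xi,x+b\}-\min\{\xi,x\}=b-\beta(\xi)$, then apply the induction hypothesis to the two continuation states with the state-dependent shift $\beta(\xi)$. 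This is the delicate part: the cash gap between the coupled continuation states works out to $K+(c-p)b+p\beta(\xi)$, which can be negative, so monotonicity of $F_{n+1}$ in cash no longer suffices and the induction hypothesis has to be combined with it very carefully.
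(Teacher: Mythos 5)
Your construction is the same one the paper uses: have the cash-rich state $(x,\,R+K+cb)$ imitate the order-up-to level $y^\ast$ that is optimal for $(x+b,\,R)$, note that the two paths then carry identical inventory $(y^\ast-\xi)^+$ into period $n+1$ while the imitator's cash never lags, and close with Lemma \ref{lemma:non-decreasing}; the paper's proof likewise needs no induction beyond the one already inside Lemma \ref{lemma:non-decreasing}. Your case split (cash surplus exactly $K$ and immediate increment lower by $cb$ when $y^\ast>x+b$; equal end-of-period cash and increment lower by $K+cb$ when $y^\ast=x+b$) is just a sharper version of the paper's single bound $\Delta R^b\le \Delta R^a+K+cb$, so on this core the two arguments coincide and your bookkeeping is correct.

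The feasibility point you isolate is precisely the step the paper elides. The paper asserts $B(R+K+cb)=\max\{0,R/c+b\}=b+B(R)$, but from \eqref{eq:Qbound} the correct value is $B(R+K+cb)=\max\{0,(R-W)/c+b\}$, and the containment $b+B(R)\le B(R+K+cb)$ that the imitation requires holds exactly when $R\ge W$, as you say. Your caution about the regime $R<W$ is warranted rather than over-scrupulous: there the catch-up order can be infeasible and the inequality itself can fail as stated (take period $N$, $x=0$, $R+cb<W$ so that neither state can order, demand at least $b$ almost surely and $(p-c)b>K$; then $F_N(x,R+K+cb)+K+cb=K+cb-W<pb-W=F_N(x+b,R)$), which is also why your attempted induction patch stalls on the possibly negative cash gap $K+(c-p)b+p\beta(\xi)$. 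In short, your proof is the paper's proof, made explicit about the hypothesis ($R\ge W$, i.e.\ cash always covers the overhead, as constraint \eqref{con:cashConstraint} implicitly presumes) under which it is actually valid.
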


Lemma \ref{lemma:cashInventoryIncreasing} implies that the expected cash increment for state $(x, R+K+cb)$ is larger than $(x+b, R)$. Thus, for two initial states ($x_1$, $R_1$) and  ($x_2$, $R_2$), if one state has less initial inventory $(x_1<x_2)$ but more initial cash, and the extra cash is sufficient to replenish the initial inventory to equal that of the other state ($R_1- R_2\geq K+c(x_2-x_1)$), then this state will result in a larger expected cash increment at the end of the planning horizon.

When initial inventory $x$ is below the $\textbf{s}$ threshold, one may order or not depending on available cash. If the initial cash $R$ is small, the retailer cannot order up to its desired inventory level to make revenue cover the ordering costs. This situation leads to the existence of a $\bf C$ threshold. When initial cash $R\leq \bf C$, it is better for the retailer to not order. We develop two forms of threshold $\bf C$.
\begin{align}
\textbf{C}=& K\label{eq:Cbound1}\\
  \textbf{C}=&\max\{R\mid x+B(R)\leq D_b, pB(R)\leq K+cB(R)\}\label{eq:Cbound}
\end{align}

\begin{lemma}
  For any period $n$, given an initial inventory $x$, if $x$ is not a $\bf{s}$ threshold, there exists a $\bf{C}$ threshold. If initial cash $R\leq \bf{C}$, it is better for the retailer to not order. The expression of Eq. \eqref{eq:Cbound1} or the solution of \eqref{eq:Cbound} can be a $\bf{C}$ threshold.\label{lemma: Cbound}
\end{lemma}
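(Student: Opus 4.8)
The plan is to verify, separately, that each of the two expressions offered in the statement is a valid $\mathbf{C}$ threshold. Throughout, the hypothesis that $x$ is not an $\mathbf{s}$ threshold plays only a bookkeeping role: when $x$ \emph{is} an $\mathbf{s}$ threshold the retailer never orders (Lemma~\ref{lemma:sBound}) and no $\mathbf{C}$ threshold is required, so the claim is vacuous; the hypothesis also guarantees that the feasible set in \eqref{eq:Cbound} is non-empty precisely when we appeal to that expression (namely when $x\le D_b$, so that $B(R)=0$ makes the set non-trivial), and otherwise one simply falls back on \eqref{eq:Cbound1}.

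The expression $\mathbf{C}=K$ of \eqref{eq:Cbound1} is immediate from the cash constraint. If $R\le K$, then $R-K-W\le -W\le 0$, so $B(R)=0$ by \eqref{eq:Qbound}, and the feasibility bounds \eqref{con:ybound} force $y_n=x$; ordering is simply infeasible, hence not ordering is trivially optimal. I would dispose of this case in a line.

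For \eqref{eq:Cbound}, write $\mathbf{C}$ for its value and assume $R\le\mathbf{C}$. Since $B(\cdot)$ is non-decreasing, $B(R)\le B(\mathbf{C})$, whence $x+B(R)\le D_b$ and $(p-c)B(R)\le K$. The first inequality is the crux: every admissible order-up-to level obeys $y_n\le x+B(R)\le D_b\le\xi_n$ with probability one, so $\min\{\xi_n,y_n\}=y_n$ and $x_n=(y_n-\xi_n)^+=0$ deterministically, whether or not the retailer orders. Thus in the optimality equation \eqref{eq:functionalequation2} both the immediate increment $\Delta R$ (which, since everything sells, reduces via \eqref{eq:EdeltaR2} to $px+(p-c)Q-W-K\delta(Q)$ with $Q:=y_n-x$) and the next-period inventory ($=0$) are deterministic for each feasible $y_n$. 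Comparing ordering an amount $Q\in(0,B(R)]$ against not ordering, the immediate increment of ordering falls short of that of not ordering by exactly $K-(p-c)Q\ge K-(p-c)B(R)\ge 0$; moreover both actions lead to a period-$(n+1)$ state with zero inventory, and the end-of-period cash $R+\Delta R$ under ordering is no larger than under not ordering. Lemma~\ref{lemma:non-decreasing}, applied to $F_{n+1}(0,\cdot)$, then makes the continuation value of ordering no larger as well; adding the two contributions shows $y_n=x$ attains the maximum in \eqref{eq:functionalequation2}, i.e.\ not ordering is optimal whenever $R\le\mathbf{C}$. (For $n=N$ the continuation term is $\gamma\cdot 0=0$ in both cases, so the conclusion holds a fortiori.)

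The main obstacle — and the one idea that makes the second part work — is recognizing that the condition $x+B(R)\le D_b$ collapses the intrinsically two-dimensional comparison: it pins the post-period inventory at $0$ no matter what the retailer does, so the only state variable propagated forward is the end-of-period cash, and ordering can only reduce it; this reduces the future comparison to the one-dimensional monotonicity furnished by Lemma~\ref{lemma:non-decreasing}. A secondary point requiring care is that one must exclude not just ordering up to the cash cap $x+B(R)$ but ordering any positive quantity; this is handled because the net-margin term $(p-c)Q$ is monotone in $Q$ and bounded above by $(p-c)B(R)\le K$ over the whole feasible range, so the immediate-increment comparison is favourable for every feasible $Q$ simultaneously.
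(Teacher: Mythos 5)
Your proof is correct and follows essentially the same route as the paper's: the $\mathbf{C}=K$ case is dispatched trivially, and for \eqref{eq:Cbound} the condition $x+B(R)\le D_b$ forces every unit to sell (so ending inventory is zero and the comparison reduces to cash), the margin $(p-c)Q$ is bounded by $K$, and Lemma~\ref{lemma:non-decreasing} handles the continuation value. The only difference is minor and in your favour: the paper reduces the argument to the two scenarios ``do not order'' versus ``order $B(R)$'' by asserting that full-capacity ordering dominates smaller orders, whereas you compare every feasible $Q\in(0,B(R)]$ against not ordering directly (and also note explicitly that the conditions in \eqref{eq:Cbound} propagate to all $R\le\mathbf{C}$ because $B(\cdot)$ is non-decreasing), which closes small gaps the paper leaves implicit.
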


\begin{proof}

Eq. \eqref{eq:Cbound1} is an apparent $\bf C$ threshold. If initial cash is less than $K$, it is always better for the retailer to not order because the retailer does not even have sufficient cash to cover the fixed ordering cost. For the initial cash $R$ defined by Eq.  \eqref{eq:Cbound}, because $x+B(R)\leq D_b$, it is better to order at full capacity since this ordering will result in the greatest cash increment.  Thus, there are two scenarios:
\begin{enumerate}[topsep=2pt, itemsep=2pt,partopsep=2pt]
  \item[(a).] do not order.
  \item[(b).] order $B(R)$ units.
\end{enumerate}

The rest of the proof is similar to that of Lemma \ref{lemma:cashInventoryIncreasing}. Since $x+B(R)\leq D_b$, $x\leq x+B(R)\leq \xi$. The cash increments for the two scenarios are as follows.
\begin{align}
  \Delta R^a=&~p\min\{x,\xi\}=px\nonumber\\
  \Delta R^b=&~p\min\{x+B(R),\xi\}-K-cB(R)=p(x+B(R))-K-cB(R)\nonumber
\end{align}

From Eq. \eqref{eq:Cbound}, $\Delta R^a\geq \Delta R^b$ because $pB(R)\leq K+cB(R)$. Define $f_n(x)$ and $f_n(x+B(R))$ as the expected cash increments of the two scenarios.
\begin{align}
f_n(x)=&~E(\Delta R^a)+\int F_{n+1}(0, R+\Delta R^a)\phi(\xi)d\xi\nonumber\\
f_n(x+B(R))=&~E(\Delta R^b)+\int F_{n+1}(0, R+\Delta R^b)\phi(\xi)d\xi\nonumber
\end{align}

Since $\Delta R^a\geq \Delta R^b$, $E(\Delta R^a)\geq E(\Delta R^b)$ and $R+\Delta R^a\geq R+\Delta R^b$. By lemma \ref{lemma:non-decreasing}, $F_{n+1}(0, R+R^a)\geq F_{n+1}(0, R+R^b)$.

Therefore, $f_n(x)\geq f_n(x+B(R))$, and it is better for the retailer to not order.

\end{proof}

Note that the \textbf{$C$} thresholds defined by Eq. \eqref{eq:Cbound} or Eq. \eqref{eq:Cbound1} are conservative. Moreover, Eq. \eqref{eq:Cbound} shows the \textbf{$C$} threshold may be state-dependent and related to initial inventory $x$ for each period.

\section{A heuristic ordering policy}

With the existence of the $\bf s$ threshold and  $\bf C$ threshold, the optimal ordering pattern is still very complex, except in the last period. Based on the above analysis and numerical tests, we propose a heuristic ordering policy.
\vskip 3pt

\noindent\textbf{$\bf (s, C(x), S)$ policy}:
\begin{equation}
  Q^\ast=\begin{cases}
  \min\{B(R), S-x\}\quad & \text{if~} x<s~\text{and}~ R>C(x)\\
  0\quad &\text{otherwise}
\end{cases}
\end{equation}

Note that the values of $s$ and $S$ can vary by period, and the values of $C$ are state-dependent and related to initial inventory $x$. $(s, C(x), S)$ policy indicates that if the initial inventory is less than $s$ and initial cash is above $C(x)$, the inventory level should be replenished to as close to $S$ as possible; otherwise, do not order.








\subsection{Computation for \texorpdfstring{$s$}{}, \texorpdfstring{$C$}{} and \texorpdfstring{$S$}{}}
To compute values of $s$, $C$ and $S$
 we provide two methods here: the first involves searching the results of stochastic dynamic programming (SDP) with some  heuristic techniques; the second is to approximate the problem with a mixed-integer model and solve the problem with some heuristic equations.

\subsubsection{Obtaining \texorpdfstring{$s$}{}, \texorpdfstring{$C(x)$}{}, and \texorpdfstring{$S$}{}  via SDP}

Let $Q^\ast_n (x, R)$ represent the optimal ordering quantity for initial inventory $x$ and initial cash $R$ at period $n$ in the SDP results. The default values for $s$ and $S$ are zero, the default values for $C$ are fixed ordering cost $K$.

For period $n=1$, if $Q_1^\ast(x, R)>0$, $s=x+1$, $S=x+Q^\ast(x, R)$, $C(x)=0$, if $Q_1^\ast(x, R)=0$, $s=x$, $S$, $C(x)$ can be fixed arbitrarily. When $n=N$, the values of $s$, $S$ and $C(x)$ can be obtained by Eq. \eqref{eq:sboundN}, \eqref{eq:SboundN}, \eqref{eq:C1boundN}, respectively.

When $1<n<N$, the method for obtaining $s$, $S$, $C$ is given below.

\textbf{Find $s$:} $s$ is selected as the minimum initial inventory level that results in not ordering for all cash states in the SDP results.

\textbf{Find $S$:} in a given period, when $x<s$, there may be more than one order-up-to level, as shown by our previous numerical case. We adopt a heuristic step by selecting the most frequent order-up-to level as $S$.

\textbf{ Find $C(x)$:} for initial inventory $x$ in a given period and $x<s$, $C(x)$ is the maximum value of initial cash that results in not ordering in the SDP results.


The pseudo-code of this method is given in Algorithm \ref{algorithm1} of Appendix \ref{app-1}. Since this method is based on the results of SDP, it enumerates all the states in each period. This is very time-consuming when the planning horizon is large. For example, in our numerical tests, if the length of the planning horizon is 15, it may take more than one hour to compute one case. So this method is only suitable for the cases of small planning horizons.

\subsubsection{Obtaining \texorpdfstring{$s$}{}, \texorpdfstring{$C(x)$}{}, and \texorpdfstring{$S$}{} values by mixed-integer programming (MIP) approximation}

 The method here adopts a similar idea of ``static-dynamic uncertainty" strategy in \cite{bookbinder1988strategies}.  Replenishment periods are first computed at the beginning of the planning horizon by a MIP model; then we decide the values of $s$, $S$ and $C(x)$  by approximating a newsvendor problem in each replenishment cycle.

The expected demand, end-of-period cash, and end-of-period inventory for period $n$ are represented by $\tilde{d}_n$, $\tilde{R}_n$ and $\tilde{x}_n$, respectively. The mixed-integer programming (MIP) model for the cash-constrained problem is as follows. The major role of the MIP model is to obtain a heuristic ordering plan by determining the values of binary variable $z_n$ that represents whether to order or not in period $n$.
\begin{alignat}{2}
  &\max\qquad &&\tilde{R}_N+\gamma \tilde{x_N}-R_0\label{eq:objectMIP}\\
  &\text{s.t.}&&n=1,2,\dots,N\nonumber\\
  & && \tilde{R}_{n-1}\geq Kz_n+c(S_n-\tilde{x}_{n-1})+W\label{con:cashConMIP}\\
  & &&\tilde{R}_n=\tilde{R}_{n-1}+p(S_n-\tilde{x}_n)-Kz_n-c(S_n-\tilde{x}_{n-1})-W\label{eq:cashFlowMIP}\\
  & && \tilde{x}_{n-1}\leq S_n\label{con:IOrderUpTo}\\
  & && S_n-\tilde{d}_n=  \tilde{x}_n\label{con:IDOrderUpTo}\\
  & && \tilde{x}_0=x_0,  \tilde{R}_0=R_0\label{eq:iniIR}\\
  & &&\tilde{x}_n\geq 0,z_n\in\{0,1\},S_n\geq 0\label{con:xywMIP}
\end{alignat}

In the MIP model, decision variable $S_n$ is the order-up-to level in period $n$. Objective function Eq. \eqref{eq:objectMIP} is the expected cash increment in the planning horizon. The cash constraint is represented by \eqref{con:cashConMIP}, and expected cash flow is given by \eqref{eq:cashFlowMIP}. Constraint \eqref{con:IOrderUpTo} indicates that the order-up-to level $S_n$ should not be less than the period's expected initial inventory. Constraint \eqref{con:IDOrderUpTo} represents the relationship of $S_n$ with $\tilde{x}_n$ and $\tilde{d}_n$. Constraint \eqref{eq:iniIR} is the value of initial inventory and initial cash. Constraint \eqref{con:xywMIP} ensures the nonnegativity and/or the binary nature of the decision variables.

For the last period, $S$, $s$ and $C$ are obtained by equations \eqref{eq:SboundN}, \eqref{eq:sboundN} and \eqref{eq:C1boundN}. For $n<N$, we compute them  in the following.

\begin{enumerate}[leftmargin = 10 pt]
  \item Compute $S$.\\
  Ordering cycles can be obtained from the results of $z_n$ in the MIP model. For example, if $z=\{1, 0, 0, 1, 0\}$, there are two ordering cycles in this ordering plan: period 1$\sim$3 and period 4$\sim$5. This can by shown by Figure \ref{fig:orderCycle}.
  \begin{figure}[ht]
  \centering
  \includegraphics[scale=1.0]{}
  \caption{A sketch of two consecutive ordering cycles}\label{fig:orderCycle}
  \end{figure}

  Because the demands in each period are independent, the demands in an ordering cycle follow a joint distribution. Assume that the beginning period for an ordering cycle is $n_1$ and that the end period for an ordering cycle is $n_2$ ($n_1<n_2\leq N$). Let $\phi_{n_1\sim n_2}$ represent the joint probability density function from period $n_1$ to period $n_2$ and $\Phi_{n_1\sim n_2}$ represent the joint cumulative distribution function. Then, in an ordering cycle, the expected cash increment without fixed ordering cost and overhead cost from period $i$ ($n_1\leq i\leq n_2$) to $n_2$ is similar to a newsvendor model, that is:
\begin{equation}
    L_{i\sim n_2}(y)=
    \begin{cases}
    \int\left[p\min\{\xi,y\}-cy\right]\phi_{i\sim n_2}(\xi)d\xi\quad & n_2<N\\
    \int\left[p\min\{\xi,y\}-cy+\gamma(y-\xi)^+\right]\phi_{i\sim n_2}(\xi)d\xi& n_2=N
    \end{cases}\label{eq:Ly2-2}
\end{equation}

Since $L_{i\sim n_2}(y)$ is concave, it reaches its maximum at $y^\ast_{i\sim n_2}=\Phi_{i\sim n_2}^{-1}(\frac{p-c}{p})$ ($n_2<N$) or $y^\ast_{i\sim n_2}=\Phi_{i\sim n_2}^{-1}(\frac{p-c}{p-\gamma})$ ($n_2=N$). $S_i$ ($n_1\leq i\leq n_2$) can be computed by the following heuristic equation:
\begin{equation}
  S_i=\min\{\tilde{x}_{i-1}+B(\tilde{R}_{i-1}),y_{i\sim n_2}^{\ast}\} \label{eq:Si}
\end{equation}

In Eq. \eqref{eq:Si}, $\tilde{x}_{i-1}+B(\tilde{R}_{i-1})$ is the approximate maximum order-up-to level for period $i$. The relation of $S_i$, $y^\ast$ and $L_{i\sim n_2}$ for a numerical case is shown in Figure \ref{fig:Lny}, where the red line represents $L_{i\sim n_2}(y)$ and $S_i=\tilde{x}_{i-1}+B(\tilde{R}_{i-1})$ in this case.

\begin{figure}[!ht]
\centering
\pgfplotsset{height=7cm} 
\begin{tikzpicture}
\begin{axis}[xlabel=$y$,ylabel=$L_{i\sim n_2}(y)$,xmin=0,xmax=100,ymin=0,ymax=300,clip mode=individual,xtick=\empty,ytick=\empty,tick label style={font=\small},
y label style={at={(0,0.5)}},
x label style={at={(0.5,-0.1)}}]

\pgfplotstableread{PNy.txt}\mydata;
\addplot [color=red,line width=1.5pt]
table [x expr=\thisrowno{0}, y expr=\thisrowno{1}] {\mydata};

 \node at (axis cs:45, -15) {\small $y^\ast$};
 \addplot[color=blue,dashed,line width=0.5pt] coordinates{ (45, 0) (45, 265)};
 \addplot[color=blue,dashed,line width=0.5pt] coordinates{ (21, 255) (39, 255)};
\node at (axis cs:21, -15) {\small $s_i$};
 \addplot[color=blue,dashed,line width=0.5pt] coordinates{ (21, 0) (21, 146)};
  \node at (axis cs:39, -15) {\small $S_i$};
 \addplot[color=blue,dashed,line width=0.5pt] coordinates{ (39, 0) (39, 255)};

 \draw[color=blue, decoration={brace,raise=4pt},decorate,line width=0.5pt]
   (axis cs: 21,146) -- (axis cs:21,255) node[midway,black,left=5pt] {$K$};
\end{axis}
\end{tikzpicture}
\caption{$s_i$, $S_i$, $y^\ast$ and $L_{i\sim n_2}$}\label{fig:Lny}
\end{figure}

\item Compute $s$.\\
  In an ordering cycle $n_1\sim n_2$, we use a heuristic step to compute $s_i$ ($n_1\leq i\leq n_2$) by the following equation.
  \begin{equation}
    s_i=\min\{y\mid L_{i\sim n_2}(y)\geq L_{i\sim n_2}(S_i)-K, 0\leq y\leq S_i\} \label{eq:si}
  \end{equation}

The idea behind Eq. \eqref{eq:si} is that $s_i$ is the smallest $y$ such that the gap between $L_{i\sim n_2}(y)$ and $L_{i\sim n_2}(S_i)$ is larger than the fixed ordering cost $K$. It is also shown in Figure \ref{fig:Lny}.

  \item Compute $C(x)$.\\
When initial inventory $x<s_i$, for period $i$ ($n_1\leq i\leq n_2$), $C_i(x)$ is computed by the following heuristic equation.
\begin{equation}
  C_i(x)=\begin{cases}
  \max\{R\mid L_i(x)\leq L_i(x+B(R))-K , 0\leq x\leq S\}\quad &L_i(S)\geq K\\
  M &L_i(S)< K
\end{cases}\label{eq:MIPC}
\end{equation}

Note that $C_i(x)$ is related to initial inventory $x$. In Eq. \eqref{eq:MIPC}, $L_i()$ is the one-period expected cash increment without overhead cost for period $i$, and $S$ is its maximum point, which is $\Phi_i^-(\frac{p-c}{p})$. The relationship of $\tilde{C}_i(x)$ and $x$ is shown in Figure \ref{fig:Liy}. Eq. \eqref{eq:MIPC} suggests that if the one-period maximum cash increment is less than the fixed ordering cost, the retailer should not order and $C_i(x)$ is a large number $M$; otherwise, $C_i(x)$ is the maximum cash such that the one-period expected cash increment minus the ordering costs of ordering full capacity is larger than the cost of not ordering.

\begin{figure}[!ht]
\centering
\pgfplotsset{height=7 cm}
\begin{tikzpicture}
\begin{axis}[xlabel=$y$,ylabel=$L_i(y)$,xmin=0,xmax=100,ymin=0,ymax=300,clip mode=individual,xtick=\empty,ytick=\empty,tick label style={font=\small},
y label style={at={(0,0.5)}},
x label style={at={(0.5,-0.1)}}]

\pgfplotstableread{PNy.txt}\mydata;
\addplot [color=red,line width=1.5pt]
table [x expr=\thisrowno{0}, y expr=\thisrowno{1}] {\mydata};

 \node at (axis cs:11, -10) {\small $x$};
  \addplot[color=blue,dashed,line width=0.5pt] coordinates{ (13, 0) (13, 91)};

  \node at (axis cs:33, -10) {\footnotesize $x+B(C_i(x))$};
   \addplot[color=blue,dashed,line width=0.5pt] coordinates{ (28, 0) (28, 195)};
   \addplot[color=blue,dashed,line width=0.5pt] coordinates{ (13, 91) (28, 91)};
   \draw[color=blue, decoration={brace,mirror, raise=4pt},decorate,line width=0.5pt]
     (axis cs: 28,91) -- (axis cs:28,195) node[midway,black,right=5pt] {$K$};

\end{axis}
\end{tikzpicture}
\caption{$C_i(x)$ }\label{fig:Liy}
\end{figure}

\end{enumerate}

The pseudo-code for this method is shown in Algorithm \ref{algorithm2} of Appendix \ref{app-1}. Comparing with obtaining values of $s$, $C(x)$ and $S$ by SDP, it can solve the problem very fast. For numerical cases with 20 periods, the MIP method can get values of $s$, $C(x)$ and $S$ in seconds during our tests.

\subsection{An illustrative example}
To further illustrate the policy and our computational methods, we present a numerical example: there are 4 periods, and the demands in each period follow a Poisson distribution. The mean demand in each period is [20, 7, 2, 14]. Other relevant parameter values are fixed ordering cost $K=24$, unit variable ordering cost $c=1$, overhead cost $W=2$, selling price $p=4$, unit salvage value $\gamma = 0$, initial inventory $x=0$, and initial cash $R=33$. The results of different methods are given in Table \ref{table:heurisicExample}.

\linespread{1}
\begin{table}[!ht]
  \centering
  \caption{An illustrative example of computing $s$, $C(x)$, and $S$}\label{table:heurisicExample}

  \begin{tabular}{lccccc}
    \toprule
    & $n=1$ &$n=2$ &$n=3$ &$n=4$ &Optimality gap\\
    \midrule
    \multirow{9}*{SDP method} &$s=1$ &$s=1$ &$s=2$ &$s=7$ &\multirow{9}*{0.00\%} \\
    \specialrule{0em}{1pt}{1pt}
    &\multirow{7}*{$C(x=0)=0$} &\multirow{7}*{$C(x=0)=34$}  &$C(x=0) =32$ &$C(x=0)=26$ \\
    & &  &$C(x=1) =39$ &$C(x=1)=30$\\
    & & & &$C(x=2)=30$\\
    & & & &$C(x=3)=30$\\
    & & & &$C(x=4)=30$\\
    & & & &$C(x=5)=30$\\
     & & & &$C(x=6)=31$\\
    &$S=7$       &$S=9$ &$S=19$ &$S=17$\\


    \specialrule{0em}{6pt}{6pt}
    \multirow{9}*{MIP method} &$s=3$ &$s=2$ &$s=0$ &$s=7$ &\multirow{9}*{3.49\%}\\
    \specialrule{0em}{1pt}{1pt}
    & &\multirow{7}*{$C(x=0)=32$}  &\multirow{7}*{$C =0$} &$C(x=0)=30$\\

    & & & &$C(x=1)=30$\\
    &$C(x=0)=26$& & &$C(x=2)=30$\\
    &$C(x=0)=30$ & & &$C(x=3)=30$\\
    &$C(x=0)=30$&&& $C(x=4)=30$\\
     &&&& $C(x=5)=30$\\
      &&&& $C(x=6)=31$\\
    \specialrule{0em}{1pt}{1pt}
    &$S=9$       &$S=11$ &$S=3$ &$S=17$\\
    \bottomrule
  \end{tabular}
\end{table}
\linespread{1.5}

As Table \ref{table:heurisicExample} shows, the SDP method is near-optimal, while the MIP method has an optimality gap 3.49\%.




\section{Computational study}
In this section, we present an extensive computational study on large test beds to investigate the effectiveness of the ordering policy and computational methods we proposed.

\subsection{Test bed}

The test beds are adopted from \cite{rossi2015piecewise} with some modifications. There are 10 demand patterns for numerical analysis: 2 life cycle patterns (LCY1 and LCY2), 2 sinusoidal patterns (SIN1 and SIN2), 1 stationary pattern (STA), 1 random pattern (RAND), and 4 empirical patterns (EMP1, EMP2, EMP3, EMP4). To ensure that SDP can solve these instances in reasonable time, we rescale the original demand values, select 10 successive periods for testing, and make cash and inventory be integers. The expected demands for different patterns are given in Table \ref{table:demand datas} and Figure \ref{fig:expecteddemand} in Appendix \ref{app-2}.

Fixed ordering cost $K$ takes values in (10, 15, 20);  unit variable ordering cost $c$ is 1, overhead cost $W=2$ and unit salvage value $\gamma=0$; price $p$ takes values in (5, 6, 7), which results in product margin taking values in (4, 5, 6); and initial cash capacity takes values in (6, 8, 10), which means initial cash $B_0$ is sufficient to order 6, 8 or 10 items. For example, if initial cash capacity is 6, initial cash $B_0=K+c*6$. The initial cash capacity settings guarantee that the retailer does not have enough cash in the first period for all the cases, which makes these cases different from uncapacitated situations. Demands follow a Poisson distribution and are independent in each period. Therefore, there are 270 numerical cases in total for each ordering policy.

The computational studies are coded in Java and run on a desktop computer with an Intel (R) Core (TM) i5-7500 CPU, at 3.40 GHz, 8 GB of RAM, and 64-bit Windows 7 operating system.


\subsection{Results analysis}
We compare this heuristic policy with the optimal results directly obtained by stochastic dynamic programming (SDP). After the values of $s$, $C(x)$ and $S$ of the different methods are obtained, we simulate them in 100,000 stochastic demand samples to obtain the expected final values and compare the gaps with respect to SDP. The average optimality gap (AVG) and maximum optimal gap (MAX) are adopted as performance metrics. Comprehensive comparison results are shown in Table \ref{table:NumericalResults}.

\linespread{1}
\begin{table}[!ht]
\centering
\caption{Computational study results of different computation methods.}\label{table:NumericalResults}
\begin{tabular}{lccccc}
\toprule
&  \multicolumn{2}{c}{$(s, C(x), S)$ by SDP} & 
\multicolumn{2}{c}{$(s,C(x), S)$ by MIP} &\multirow{2}*{Cases}\\
\cmidrule(lr){2-3} \cmidrule(lr){4-5}
 & MAX &AVG  & MAX & AVG  \\
\midrule
Fixed ordering cost \\
\specialrule{0em}{1pt}{1pt}
10    &0.41\% &0.00\%  &4.85\%    &1.35\%   &90\\
15   &0.27\% &0.00\% &10.91\% &1.93\%  &90\\
20   &0.65\% &0.03\%  &17.66\%    &2.87\%   &90\\
\specialrule{0em}{2pt}{2pt}
Margin \\
\specialrule{0em}{1pt}{1pt}
4   &0.65\% &0.03\%  &17.66\%    &2.87\%   &90\\
5  &0.27\% &0.00\%  &10.55\%    &1.89\%   &90\\
6 &0.02\% &0.00\%  &5.49\%    &1.39\%   &90\\
\specialrule{0em}{2pt}{2pt}
Cash capacity\\
\specialrule{0em}{1pt}{1pt}
6  &0.65\% &0.02\% &17.66\%    &2.47\%   &90\\
8   &0.45\% &0.01\%  &7.85\%    &1.99\%   &90 \\
10   &0.33\% &0.00\%  &8.17\%    &1.70\%   &90 \\
\specialrule{0em}{2pt}{2pt}
Demand pattern\\
\specialrule{0em}{1pt}{1pt}
STA   &0.33\% &0.01\%  &2.63\%    &0.91\%   &27\\
LC1   & 0.00\%  &0.00\%  & 2.50\%   &1.14\%   &27\\
LC2  &0.04\%  &0.01\%  & 3.07\%   &1.68\%   &27\\
SIN1    &0.65\%  &0.06\%  &4.93\%    &1.35\%   &27\\
SIN2    &0.41\%  &0.04\%  & 17.66\%    & 2.93\% &27\\
RAND   &0.00\% &0.00\%  &3.17\%    & 1.20\%  &27\\
EMP1    &0.01\% &0.00\% &10.04\%   & 3.09\%   &27\\
EMP2    &0.00\% &0.00\% & 6.28\%   & 2.26\%  &27\\
EMP3   &0.03\% &0.00\% & 11.00\%   & 3.44\%  &27\\
EMP4   &0.01\% &0.00\%  & 9.09\%   & 2.52\%  &27\\
\specialrule{0em}{3pt}{3pt}
Gap over 1\%  &\multicolumn{2}{c}{0}&\multicolumn{2}{c}{182}\\
Avg Time  &\multicolumn{2}{c}{305s}&\multicolumn{2}{c}{0 s}\\
General  &0.65\%~~~ &0.01\%  &17.66\%    & 2.05\%  &270\\
\bottomrule
\end{tabular}
\end{table}

As shown in the table, policy $(s, C(x), S)$ performs very well in terms of both the maximum gap (0.65\%) and the average gap (0.01\%) of the SDP method. The MIP method has a maximum gap of approximately 17.66\% and an average gap of approximately 2.05\%. Among all 270 numerical cases in the test bed, there are no instances for the SDP method that have an optimality gap greater than 1\%. For the MIP method, 182 numerical cases have a gap exceeding 1\%.



When fixed ordering cost increases, the numerical tests show that the MIP method performs worse with obvious increasing optimality gaps (average gap changes from 1.35\% to 2.87\%, maximum gap changes from 4.85\% to 17.66\%) and there are also slightly increasing optimality gaps for the SDP method (average gap changes from 0.00\% to 0.03\%). However, with an increasing margin of the product or cash capacity, both methods perform better with lower optimality gaps.

The results also suggest that demand patterns may have a large effect on the performance of the two methods. For example, the maximum gaps and average gaps all grow large in demand pattern EMP1, EMP2, EMP3, EMP4 for policy the MIP method. But for the SDP method, the demand patterns do not have a clear influence to its average gaps. Moreover, for the stationary demand pattern (STA) which has the least demand fluctuation, both methods show good performance with low maximum gap and average gap.


In terms of computational efficiency, the average running time for the SDP method exceeds 300 seconds. However, the MIP approximation method is very fast, with an average running time of less than 1 second. Adding that its average gap is very low (2.05\%), the MIP method performs very well.

\section{Conclusions}
Cash flow management is a key concern for many small businesses. In this paper, we consider a cash-constrained retailer with fixed ordering costs to maximize its final expected cash increment. We find that the optimal ordering policy for this problem is complex but has a $\bf s-C$ structure: when initial inventory is over the $\bf s$ threshold, do not order; when initial inventory is lower than the $\bf s$ threshold, but initial cash is less than the $\bf C$ threshold, do not order either. We propose a heuristic ordering policy for this problem: $(s, C(x), S)$. Computational experiments demonstrate that $(s, C(x), S)$ performs well featuring a very small optimality gap. A heuristic method that approximates this problem as a mixed-integer linear model and newsvendor problem is also developed to compute the values of $ s$, $C(x)$ and $S$. Tests show that this method can solve the problem rapidly with narrow optimality gaps. Future research could extend this work in several directions: the first extension is to consider lead time (time lag) in the problem; the second possible extension is to investigate the cash flow constraints in the multi-item stochastic inventory problem.

\section*{Acknowledgments}
The research presented in this paper is supported by the Fundamental Research Funds for the Central Universities of China under Project SWU1909738, National Natural Science Foundation of China under projects
 71571006.

\bibliography{liter}
\bibliographystyle{plainnat} 

\clearpage
\begin{appendices}
\renewcommand\sectionname{Appendix}
\section{Proofs for Lemma \ref{lemma:non-decreasing} and Lemma 3}\label{app-3}

\subsection{Proof for Lemma \ref{lemma:non-decreasing}}
\begin{proof}

  When $n=N+1$,  $F_{N+1}(x, R) = \gamma x$. It is intuitively clear that $F_{N+1}(x, R)$ is non-decreasing in $R$.

  Now, suppose this property holds for period $n+1$; we prove the results for period $n$.

  When $R$ is increased to $R'$, decision variable $y$ has a larger domain. Assume the optimal $y$ for initial cash $R$ is $y^\ast$. For initial cash $R'$, let its decision $y^{'\ast}=y^\ast$. Thus, $(y^{'\ast}-\xi)^+=(y^\ast-\xi)^+$. From Eq. \eqref{eq:deltaR2} and \eqref{eq:EdeltaR2}, $\Delta R'=\Delta R$, $E(\Delta R)=E(\Delta R')$, $L(y^\ast)+cx-K\delta(y^\ast-x)=L(y^{\ast '})+cy^{\ast '}-K\delta(y^{\ast '}-x)$. In Eq. \eqref{eq:functionalequation2}, because $F_{n+1}((y^{\ast'}-\xi)^+, R'+\Delta R)$ is not less than $F_{n+1}((y^\ast-\xi)^+, R+\Delta R)$ by supposition, $F_n(x, R')$ is not less than $F_n(x, R)$.

Lemma \ref{lemma:non-decreasing} is thus proved by induction.

\end{proof}

\subsection{Proof for Lemma \ref{lemma:non-decreasing}}

\begin{proof}

  For two scenarios of states $(x, R+K+cb)$ and $(x+b, R)$, let $\Delta R^a$ and $\Delta R^b$ represent the one-period cash increment of the two scenarios, respectively.  From Eq. \eqref{eq:functionalequation2},

  Scenario (a):
  \begin{align}
    F_n(x,~R+K+cb)=\max\limits_{x\leq y\leq x+B(R+K+cb)}\left\{E(\Delta R^a)+ \int_{0}^{D_u}F_{n+1}((y-\xi)^+, R+K+cb+\Delta R^a)\phi(\xi)d\xi\right\}\label{eq:Sca}
  \end{align}

Scenario (b):
\begin{align}
  F_n(x+b,~R)=\max\limits_{x+b\leq y\leq x+b+B(R)}\left\{E(\Delta R^b)+ \int_{0}^{D_u}F_{n+1}((y-\xi)^+, R+\Delta R^b)\phi(\xi)d\xi\right\}\label{eq:Scb}
\end{align}

By Eq. \eqref{eq:Qbound}, $B(R+K+cb)=\max\{0, R/c+b\}=b+B(R)$. The domain for Scenario (a) is $x\leq y\leq x+b+B(R)$, and the domain for Scenario (b) is $x+b\leq y\leq x+b+B(R)$. The domain for Scenario (a) is larger than that of Scenario (b). For any decision $y^b$ in Scenario (b), let $y^a=y^b$. Then,
\begin{align}
  \Delta R^a=&~p\min\{\xi,y^a\}-K\delta(y^a-x)-c(y^a-x)-W\nonumber\\
  \Delta R^b=&~p\min\{\xi,y^b\}-K\delta(y^b-x-b)-c(y^b-x-b)-W\leq\Delta R^a+K+cb\nonumber
\end{align}

  Therefore, $E(\Delta R^a)+K+cb \geq E(\Delta R^b)$. The end-of-period cash of period $n+1$ for the two scenarios is:
\begin{align}
  R_{n+1}^a=&~R+K+cb+\Delta R^a\nonumber\\
  R_{n+1}^b=&~R+\Delta R^b\leq R_{n+1}^a\nonumber
\end{align}

Since $y^a=y^b$, $(y^a-\xi)^+=(y^b-\xi)^+$. By Lemma \ref{lemma:non-decreasing}, $F_{n+1}((y^a-\xi)^+, R_{n+1}^a)\geq F_{n+1}((y^b-\xi)^+, R_{n+1}^b)$. Additionally, because $E(\Delta R^a)+K+cb \geq E(\Delta R^b)$, by Eq. \eqref{eq:Sca} and Eq. \eqref{eq:Scb}, we can conclude that $F_n(x,~R+K+cb)+K+cb\geq F_n(x+b, ~R)$.

\end{proof}

\section{Algorithms to compute \texorpdfstring{$s$}{}, \texorpdfstring{$C(x)$}{} and \texorpdfstring{$S$}{}}\label{app-1}

\begin{algorithm}
\caption{Obtaining $s, C, S$ via SDP}\label{algorithm1}
\KwData{Optimal ordering quantity $Q_n^\ast(x, R)$ for each state $x$, $R$ in each period $n$ from SDP, total states for period $n$ is represented by set $\Omega_n$ }
\KwResult{Values of $s, C(x), S$ in each period }
Initialize:~~$s\leftarrow 0$,~~ $S\leftarrow 0$, ~~$C(x)\leftarrow K$.\;

\For {$n=1$ \emph{\KwTo} $N$}{
\uIf{$n=1$}{
\eIf{$Q^\ast_1(x,R)>0$}{$s\leftarrow x+1$,~~ $S\leftarrow x+Q_1^\ast(x, R)$\;
  $C(x)\leftarrow0$\;}
  {$s\leftarrow x$\;
  $S, ~~C(x)$ can be fixed arbitrarily\;}
}
\uElseIf{$n=N$}{
$s$ is computed by Eq. \eqref{eq:sboundN}, ~~$S$ is computed by Eq. \eqref{eq:SboundN}, ~~$C(x)$ is computed by Eq. \eqref{eq:C1boundN}\;
}
 \Else{
 $s\leftarrow\min\{x\mid Q_n^\ast(x, R)=0, ~~ (x,R)\in\Omega_n\}$\;
 $S\leftarrow$ most frequent order-up-to level in this period\;
 $C(x)\leftarrow \max\{R\mid Q^\ast_n(x, R)=0, x<s\}$\;
 }
}
\end{algorithm}

\begin{algorithm}[H]
\caption{Obtaining $s, C, S$ by MIP approximation}\label{algorithm2}
\KwData{parameter values of the problem}
\KwResult{Values of $s$, $C(x)$, $S$}
Initialize:~~$s\leftarrow 0$,~~ $S\leftarrow 0$, ~~$C(x)\leftarrow K$\;
Compute the MIP model and get a heuristic ordering plan represented by values of $z_n$\;

\ForEach{ordering cycle in the ordering plan}{
\ForEach{period in the ordering cycle}{
compute $S$ by Eq. \eqref{eq:si}\;
compute $s$ by Eq. \eqref{eq:Si}\;
compute $C(x)$ by Eq. \eqref{eq:MIPC}\;
}
}
\end{algorithm}

\clearpage
\section{Demand patterns in the test bed}\label{app-2}

\begin{table}[!ht]
\centering
\caption{Expected demand values for different demand patterns.}\label{table:demand datas}
\begin{tabular}{lrrrrrrrrrr}
\toprule
Demand pattern & \multicolumn{10}{c}{Expected demand values}\\
\midrule
STA& 15& 15& 15& 15& 15& 15& 15& 15 & 15 & 15\\
LC1& 21.15& 18.9& 17.7& 16.5& 15.15& 13.95& 12.75& 11.55 &10.35 & 9.15\\
LC2& 6.6& 9.3& 11.1& 12.9& 16.8& 21.6& 24& 26.4 & 31.5 & 33.9\\
SIN1& 12.1 & 10 & 7.9 & 7 & 7.9 & 10 & 12.1 & 13 & 12.1 & 10\\
SIN2& 15.7 & 10 & 4.3 & 2 & 4.3 & 10 & 15.7 & 18 & 15.7 & 10\\
RAND& 41.8& 6.6& 2& 21.8& 44.8& 9.6& 2.6& 17 &30 & 35.4\\
EMP1& 4.08& 12.16& 37.36& 21.44& 39.12& 35.68& 19.84& 22.48 &29.04 & 12.4\\
EMP2& 4.7& 8.1& 23.6& 39.4& 16.4& 28.7& 50.8& 39.1 &75.4 & 69.4\\
EMP3& 4.4& 11.6& 26.4& 14.4& 14.6& 19.8& 7.4& 18.3 &20.4 & 11.4\\
EMP4& 4.9& 18.8& 6.4& 27.9& 45.3& 22.4& 22.3& 51.7 &29.1 & 54.7 \\
\bottomrule
\end{tabular}
\end{table}

\begin{figure}[!ht]
\centering
\includegraphics[scale=0.45]{}
\caption{Demand patterns in our computational analysis}\label{fig:expecteddemand}
\end{figure}

\end{appendices}

\end{document}